\numberwithin{equation}{section}
\newtheorem{thm}{Theorem}[section]
\newtheorem{lem}[thm]{Lemma}
\theoremstyle{definition}
\newtheorem{rem}[thm]{Remark}
\newtheorem{ex}[thm]{Example}
\newcommand{\Aut}{\operatorname{Aut}}
\begin{document}
\title{Non-symmetric class $2$ association schemes obtained
  by doubling of skew-Hadamard matrices are non-schurian%
  \footnote{
    Ilia Ponomarenko pointed out that the main result of this article is already known,
    for example, Theorem 2.6.6 in [I. A. Faradžev, M. H. Klin, and M. E. Muzichuk,
    Cellular rings and groups of automorphisms of graphs,
    Investigations in algebraic theory of combinatorial objects, Math. Appl. (Soviet Ser.),
    vol. 84, Kluwer Acad. Publ., Dordrecht, 1994].
    However, he also said that the isomorphism of automorphism groups seems to be new.
  }}
\author{Akihide Hanaki}
\date{Department of Mathematics, Faculty of Science,\\
  Shinshu University, Matsumoto 390-8621, Japan\\
  e-mail : hanaki@shinshu-u.ac.jp}
\maketitle


\begin{abstract}
  We can obtain a non-symmetric class $2$ association scheme by a skew-Hadamard matrix.
  We begin with a skew-Hadamard matrix of order $n$, construct
  a skew-Hadamard matrix of order $2n$ by doubling construction, and
  a non-symmetric class $2$ association scheme of order $2n-1$.
  We will show that the association scheme obtained in this way never be schurian
  if $n$ is greater than or equal to $8$.
\end{abstract}

\section{Introduction}
Typical examples of association schemes are obtained by transitive permutation groups,
and such schemes are said to be schurian.
There are so many non-schurian association schemes,
but not so many constructions of non-schurian association schemes are known,
for example in \cite{VanDam-Koolen2005}, \cite[Theorem 26.4]{Wielandt}.
In this article, we will show that association schemes obtained by doubling of
skew-Hadamard matrices are non-schurian.

A matrix $H$ is said to be a \emph{Hadamard matrix} of order $n$
if $H$ an $n\times n$ matrix with entries in $\{\pm1\}$
such that $HH^T=nI_n$, where $H^T$ is the transpose of $H$ and $I_n$ is the identity matrix.
It is well known that, if a Hadamard matrix of order $n$ exists, then
$n=1,2$ or $n\equiv 0\pmod{4}$.
A Hadamard matrix $H$ is said to be \emph{skew} if $S=H-I_n$ is skew-symmetric,
namely $S^T=-S$.

Let $H$ be a skew-Hadamard matrix of order $n$.
By changing signs, we can assume that
$$H=\left[
  \begin{array}{c|ccc}
    1&1&\dots&1\\
    \hline
    -1&&&\\
    \vdots&&I_{n-1}+A_1-A_2&\\
    -1 &&&
  \end{array}
\right],$$
where $A_1, A_2$ are matrices with entries in $\{0,1\}$ and $A_1^T=A_2$.
It is known that $\mathfrak{X}=\{A_0=I_{n-1}, A_1, A_2\}$ is 
a non-symmetric class $2$ association scheme of order $n-1$.
Set $S=H-I_n$ and
$$H'=\left[
  \begin{array}{cc}
    I_n+S & I_n+S\\
    -I_n+S & I_n-S
  \end{array}
\right].$$
This is also a skew-Hadamard matrix and called the \emph{doubling} of $H$
\cite[Section 2.6]{Koukouvinos-Stylianou2008}.
By definition, we can write
$$H'=\left[
  \begin{array}{c|ccc}
    1&1&\dots&1\\
    \hline
    -1&&&\\
    \vdots&&I_{2n-1}+B_1-B_2&\\
    -1 &&&
  \end{array}
\right],$$
where $B_1, B_2$ are matrices with entries in $\{0,1\}$ and $B_1^T=B_2$.
In this case, $\mathfrak{Y}=\{B_0=I_{2n-1}, B_1, B_2\}$ is also
a non-symmetric class $2$ association scheme of order $2n-1$.
By construction, we can see that
$$B_1=\left[
  \begin{array}{ccc|c|ccc}
    &&&0&&&\\
    &A_1&&\vdots&&A_0+A_1&\\
    &&&0&&&\\
    \hline
    1&\dots&1&0&0&\dots&0\\
    \hline
    &&&1&&&\\
    &A_1&&\vdots&&A_2&\\
    &&&1&&&
  \end{array}
\right].$$
The main results of this article show that, if $n\geq 8$,
then the automorphism groups of $\mathfrak{X}$ and $\mathfrak{Y}$ are isomorphic,
the automorphism group of $\mathfrak{Y}$ is intransitive,
and thus $\mathfrak{Y}$ is non-schurian.

\section{Preliminary}
We define association schemes in matrix form.
Let $m$ be a positive integer.
An \emph{association scheme} of order $m$ is a set $\mathfrak{X}=\{A_0,\dots,A_d\}$ of
$m\times m$ matrices with entries in $\{0,1\}$ such that
(1) $A_0=I_m$, the identity matrix,
(2) $\sum_{i=0}^d A_i=J_m$, the matrix with all entries $1$, 
(3) for $i \in \{0,1,\dots,d\}$, $A_i^T\in \mathfrak{X}$, and
(4) for all $0\leq i, j\leq d$, the product $A_iA_j$ is a linear combination of $\mathfrak{X}$.
The number $d$ is called the \emph{class} of $\mathfrak{X}$.
An association scheme $\mathfrak{X}$ is said to be \emph{symmetric} if $A_i^T=A_i$
for all $1\leq i\leq d$.
An association scheme $\mathfrak{X}$ is said to be \emph{commutative} if
$A_iA_j=A_jA_i$ for all $0\leq i, j\leq d$.
It is known that all association schemes of class at most $4$ are commutative
\cite[Theorem 4.5.1 (ii)]{Zi}.

Let $\mathfrak{X}=\{A_0,\dots,A_d\}$ be an association scheme of order $m$.
For a permutation $\sigma$ of degree $m$, we denote by $P_\sigma$ the permutation matrix 
corresponding to $\sigma$.
The \emph{automorphism group} $\Aut(\mathfrak{X})$ of $\mathfrak{X}$
is defined by
$$\Aut(\mathfrak{X})=\{\sigma\in S_m\mid \text{$P_\sigma^TA_iP_\sigma=A_i$ for all $0\leq i\leq d$}\}.$$

Let $G$ be a transitive permutation group on $X=\{1,\dots,m\}$.
We define the action of $G$ on $X\times X$ by $(x,y)^g=(x^g,y^g)$.
Let $X\times X=R_0\cup\dots\cup R_d$ be the orbit partition of this action,
where $R_0=\{(x,x)\mid x\in X\}$.
Let $A_i$ be the adjacency matrix of $R_i$.
Then $\mathfrak{X}=\{A_0,\dots,A_d\}$ is an association scheme.
An association scheme obtained by a transitive permutation group is said to be \emph{schurian}.
If $\mathfrak{X}$ is schurian, then $\Aut(\mathfrak{X})$ is transitive and 
$\Aut(\mathfrak{X})$ defines $\mathfrak{X}$.

\begin{ex}[Non-symmetric class $2$ association scheme]
  Let $\mathfrak{X}=\{A_0,A_1,A_2\}$ be a non-symmetric class $2$ association scheme of order $m$.
  Such association scheme exists only if $m\equiv 3\pmod{4}$.
  Since $\mathfrak{X}$ is non-symmetric, $A_1^T=A_2$.
  The products of matrices are :
  \begin{eqnarray*}
  A_1^2 &=& \frac{m-3}{4}A_1+\frac{m+1}{4}A_2,\quad
            A_2^2 =\frac{m+1}{4}A_1+\frac{m-3}{4}A_2,\\
  A_1A_2 &=&A_2A_1 = \frac{m-1}{2}A_0+\frac{m-3}{4}A_1+\frac{m-3}{4}A_2.
  \end{eqnarray*}
  Eigenvalues of $A_1$ are $(m-1)/2$ and $(-1\pm\sqrt{-m})/2$,
  and thus $A_1$ is invertible.
  In this case, $\sigma\in S_m$ is an automorphism of $\mathfrak{X}$ if and only if
  $P_\sigma^TA_1P_\sigma=A_1$.
\end{ex}

\section{Results}
In the rest of this article, as in Introduction,
$H$ is a skew-Hadamard matrix of order $n$, $H'$ is the doubling of $H$,
and $\mathfrak{X}=\{A_0,A_1,A_2\}$ and $\mathfrak{Y}=\{B_0,B_1,B_2\}$
are non-symmetric class $2$ association schemes obtained by $H$ and $H'$, respectively.
We assume that $n\geq 8$.
Recall that
\begin{equation}\label{eq1}
B_1=\left[
  \begin{array}{ccc|c|ccc}
    &&&0&&&\\
    &A_1&&\vdots&&A_0+A_1&\\
    &&&0&&&\\
    \hline
    1&\dots&1&0&0&\dots&0\\
    \hline
    &&&1&&&\\
    &A_1&&\vdots&&A_2&\\
    &&&1&&&
  \end{array}
\right],
\end{equation}
\begin{eqnarray}
  A_1^2 &=& \frac{n-4}{4}A_1+\frac{n}{4}A_2,\quad
            A_2^2 =\frac{n}{4}A_1+\frac{n-4}{4}A_2,\label{eq2}\\
  A_1A_2 &=&A_2A_1 = \frac{n-2}{2}A_0+\frac{n-4}{4}A_1+\frac{n-4}{4}A_2,\label{eq3}
\end{eqnarray}
and
\begin{eqnarray}
  B_1^2 &=& \frac{n-2}{2}B_1+\frac{n}{2}B_2,\quad
            B_2^2 = \frac{n}{2}B_1+\frac{n-2}{2}B_2.\label{eq4}\\
  B_1B_2 &=&B_2B_1 = (n-1)B_0+\frac{n-2}{2}B_1+\frac{n-2}{2}B_2.\label{eq5}
\end{eqnarray}

For a matrix $M$, we denote by $M_{ij}$ the $(i,j)$-entry of $M$.
For $i \in \{1,\dots,2n-1\}$, we set
$N(i)=\{j \mid 1\leq j\leq 2n-1,\ (B_1)_{ij}=1\}$.
For $1\leq i<j\leq 2n-1$, we set
$N(i,j)=N(i)\cap N(j)$,
and for $1\leq i<j<k\leq 2n-1$, we set
$N(i,j,k)=N(i)\cap N(j)\cap N(k)$.
Also we set 
$\nu(i,j,k)=|N(i,j,k)|$.
The number $\nu(i,j,k)$ was used in \cite{Hanaki-Kharaghani-Mohammadian-TayfehRezaie2020}.
We remark that $|N(i,j)|=(B_1B_1^T)_{ij}=(B_1B_2)_{ij}=(n-2)/2$ by (\ref{eq5}).
We also remark that $\nu(i,j,k)=\nu(i^\sigma,j^\sigma,k^\sigma)$ for $\sigma\in\Aut(\mathfrak{Y})$.

The next lemma is crucial for the proof of our mail results.

\begin{lem}\label{lem1}
  For $1\leq i<j<k\leq 2n-1$, $\nu(i,j,k)\leq (n-2)/2$ holds.
  The equality holds if and only if $(i,j,k)=(a,n,n+a)$ for some $1\leq a\leq n-1$.
\end{lem}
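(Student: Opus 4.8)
The plan is to exploit the explicit block structure of $B_1$ in \eqref{eq1}, which displays the $(2n-1)\times(2n-1)$ index set as $\{1,\dots,2n-1\}=I\cup\{n\}\cup I'$ with $I=\{1,\dots,n-1\}$ and $I'=\{n+1,\dots,2n-1\}$, and to read off $N(i)$ for each $i$ in terms of the neighbourhoods coming from $A_1,A_2$ on the $(n-1)$-point scheme $\mathfrak{X}$. Write $N_1(a),N_2(a)$ for the $A_1$- and $A_2$-neighbourhoods of $a$ in $\mathfrak{X}$ (so $|N_1(a)|=|N_2(a)|=(n-2)/2$, and $a\notin N_1(a)\cup N_2(a)$, and $N_1(a)\cap N_2(a)=\emptyset$ since $A_1+A_2=J-I$). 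From the block form: for $a\in I$, $N(a)=N_1(a)\sqcup(\{a'\}\cup N_1(a)')$ where $x'$ denotes the copy of $x\in I$ sitting in $I'$; for the middle index, $N(n)=I$ (the all-ones row); and for $a'\in I'$, $N(a')=(\{a\}\cup N_1(a))\cup\{n\}\cup N_2(a)'$. So each $N(i)$ has size $n-1$, consistent with $(B_1B_2)_{ii}=(2n-1-1)/2=n-1$ — wait, the paper says $|N(i,j)|=(n-2)/2$, so in fact each row of $B_1$ has $(2n-1-... )$; I will just take $|N(i)|$ as whatever \eqref{eq1} gives and not belabour it.

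The next step is a case analysis over the possible locations of $i<j<k$ among the three blocks $I$, $\{n\}$, $I'$, bounding $\nu(i,j,k)=|N(i)\cap N(j)\cap N(k)|$ in each case. The key combinatorial input is that any pairwise intersection already has size $(n-2)/2$ by \eqref{eq5}, so $\nu(i,j,k)\le(n-2)/2$ is automatic; the entire content is the \emph{equality} characterization. For that I would compute, for representative triples, $N(i)\cap N(j)$ explicitly using the intersection numbers of $\mathfrak{X}$ from \eqref{eq2}--\eqref{eq3} (e.g.\ $|N_1(a)\cap N_1(b)|$, $|N_1(a)\cap N_2(b)|$ depend only on whether $b\in N_1(a)$, $b\in N_2(a)$, or $b=a$), and then intersect with $N(k)$. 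The candidate equality triple $(a,n,n+a)$ should fall out because $N(a)\cap N(n)=N_1(a)$ (the part of $N(a)$ lying in $I$) has size $(n-2)/2$, and one checks $N_1(a)\subseteq N(n+a)$ directly from the block description of $N(a')$ above (indeed $N(a')\cap I=\{a\}\cup N_1(a)\supseteq N_1(a)$). Conversely, whenever the triple is not of this shape, I would show at least one element drops out of the triple intersection, typically because the three relevant $\mathfrak{X}$-neighbourhoods cannot all contain a common point once $n\ge 8$ makes the intersection numbers strictly smaller than $(n-2)/2$ — here is where the hypothesis $n\ge8$ is used, to rule out small-$n$ coincidences.

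The main obstacle I anticipate is the bookkeeping in the "mixed" cases where $i,j,k$ straddle two blocks without hitting the pattern $(a,n,n+a)$: for instance $i=a\in I$, $j=b\in I$, $k=b'\in I'$, or $i=a\in I$, $j=a'\in I'$, $k=b'\in I'$ with $b\ne a$. In each such case one must verify that forcing $\nu=(n-2)/2$ would force a containment among $A_1$/$A_2$-neighbourhoods in $\mathfrak{X}$ that is impossible — e.g.\ $N_1(a)=N_1(b)$ or $N_1(a)\subseteq N_2(b)$ — and rule these out using that the eigenvalues $(-1\pm\sqrt{-n})/2$ of $A_1$ are non-real (so $A_1$ generates a $3$-dimensional algebra and $\mathfrak{X}$ is a genuine class-$2$, not class-$1$, scheme), together with the explicit valencies. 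Organising this into a clean finite set of cases, and isolating the one-line reason each non-canonical case fails, is the real work; once that template is set up, each case is a short intersection-number computation.
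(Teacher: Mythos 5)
Your setup matches the paper's: the bound $\nu(i,j,k)\le(n-2)/2$ is immediate from $|N(i,j)|=(B_1B_2)_{ij}=(n-2)/2$, equality forces $N(i,j,k)=N(i,j)=N(i,k)=N(j,k)$, and the analysis is organized by the position of $i,j,k$ relative to the blocks $\{1,\dots,n-1\}$, $\{n\}$, $\{n+1,\dots,2n-1\}$. The "if" direction is also handled correctly (for $(a,n,n+a)$ one has $N(a)\cap N(n)=N_1(a)\subseteq N(n+a)$). However, there is a genuine gap: the "only if" direction, which is the entire content of the lemma, is not proved — you explicitly defer it ("the real work") and only describe what kind of contradiction you expect in each non-canonical case. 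Moreover, the contradictions you anticipate (forcing containments such as $N_1(a)=N_1(b)$ or $N_1(a)\subseteq N_2(b)$, to be excluded via the non-real eigenvalues of $A_1$) are not what actually arises. In the paper, the case with all three indices in $\{1,\dots,n-1\}$ is killed by a parity-type argument: equality forces $|\{i+n,j+n\}\cap N(i,j,k)|=|\{j+n,k+n\}\cap N(i,j,k)|=|\{i+n,k+n\}\cap N(i,j,k)|=1$ simultaneously, which is impossible; the mixed case $i<j<n<k$ and the case $n<i<j<k$ are killed by counting arguments that use the disjointness of the supports of corresponding rows of $A_1$ and $A_2$ (i.e.\ $A_1+A_2=J-I$), and it is exactly there that $n\ge 8$ enters (e.g.\ one is forced to $n/4=1$). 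The case $i<n<j<k$ has the one-line reason $n\in N(j)\cap N(k)$ but $n\notin N(i)$, and when $n\in\{i,j,k\}$ outside the canonical pattern one simply computes $\nu=(n-4)/4$. None of these arguments appear in your proposal, so as written it establishes only the easy half of the statement.

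Two smaller inaccuracies to fix if you carry this out: your description of the neighbourhood of a point $n+a$ in the lower block is wrong — from \eqref{eq1} it is $N(n+a)=N_1(a)\cup\{n\}\cup(N_2(a)+n)$, with no extra element $a$ (your version gives $|N(n+a)|=n$ instead of the correct valency $n-1$); and the diagonal entry $(B_1B_2)_{ii}=n-1$ is the valency, while $|N(i,j)|=(n-2)/2$ concerns off-diagonal entries, so there is no tension there.
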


\begin{proof}
  Since $|N(i,j)|=(n-2)/2$, the inequality $\nu(i,j,k)\leq (n-2)/2$ is clear
  and the equality holds if and only if
  $N(i,j,k)=N(i,j)=N(i,k)=N(j,k)$.

  Case 1.
  First, we consider the case $(i,j,k)=(a,n,n+a)$ for some $1\leq a\leq n-1$.
  In this case, by (\ref{eq1}), $N(n)=\{1,\dots,n-1\}$ and $N(a,n)=N(n,n+a)$.
  Thus the equality holds.

  Case 2.
  Suppose $n\in\{i,j,k\}$ and not in Case 1.
  By (\ref{eq1}) and (\ref{eq3}), $\nu(i,j,k)=(n-4)/4$
  and $\nu(i,j,k)\ne (n-2)/2$.

  Case 3.
  Suppose $i<j<k<n$.
  We assume $\nu(i,j,k)=(n-2)/2$.
  Thus $N(i,j,k)=N(i,j)=N(i,k)=N(j,k)$.
  By (\ref{eq3}), we can set $N(i,j)\cap\{1,\dots,n-1\}=\{a_1,\dots,a_{(n-4)/4}\}$
  and $N(i,j)\cap\{n+1,\dots,2n-1\}=\{a_1+n,\dots,a_{(n-4)/4}+n, \ell\}$ for $\ell=i+n$ or $j+n$.
  Here $\ell$ comes from $A_0$.
  Thus $|\{i+n,j+n\}\cap  N(i,j,k)|=1$.
  Similarly $|\{j+n,k+n\}\cap  N(i,j,k)|=1$ and $|\{i+n,k+n\}\cap  N(i,j,k)|=1$.
  This is impossible.

  Case 4.
  Suppose $i<j<n<k$.
  We assume $\nu(i,j,k)=(n-2)/2$.
  Thus $N(i,j,k)=N(i,j)=N(i,k)=N(j,k)$.
  Similar to Case 3, we can set $N(i,j)\cap\{1,\dots,n-1\}=\{a_1,\dots,a_{(n-4)/4}\}$
  and $N(i,j)\cap\{n+1,\dots,2n-1\}=\{a_1+n,\dots,a_{(n-4)/4}+n, \ell\}$ for $\ell=i+n$ or $j+n$.
  Since the sets of positions of ones in the $(k-n)$-th rows of $A_1$ and $A_2$ are disjoint,
  $a_{m+n}$ is not in $N(k)$ for $1\leq m\leq n-1$.
  Thus $n/4=|N(i,j)\cap\{n+1,\dots,2n-1\}|=1$.
  This contradicts to the assumption $n\geq 8$.

  Case 5.
  Suppose $i<n<j<k$.
  We assume $\nu(i,j,k)=(n-2)/2$.
  Thus $N(i,j,k)=N(i,j)=N(i,k)=N(j,k)$.
  In this case, $n\in N(j,k)$ but $n$ is not in $N(i)$.
  This is a contradiction.

  Case 6.
  Suppose $n<i<j<k$.
  We assume $\nu(i,j,k)=(n-2)/2$.
  Thus $N(i,j,k)=N(i,j)=N(i,k)=N(j,k)$.
  By (\ref{eq3}) and $n\geq 8$, $|N(i,j)\cap\{1,\dots,n-1\}|=|N(i,j)\cap\{n+1,\dots,2n-1\}|=(n-4)/4>0$.
  We set $M(t)=(N(t)\setminus N(i,j,k))\cap\{1,\dots,n-1\}$ for $t\in \{i,j,k\}$.
  We can see that $(N(i,j,k)\cap\{1,\dots,n-1\})\cup M(i)\cup M(j)\cup M(k)$ is a disjoint union
  and $|M(i)|=n/4$.
  By counting elements, we have 
  $(N(i,j,k)\cap\{1,\dots,n-1\})\cup M(i)\cup M(j)\cup M(k)=\{1,\dots,n-1\}$.
  Suppose $a+n\in N(i,j,k)\cap\{n+1,\dots,2n-1\}$ ($1\leq a\leq n-1$).
  Since the sets of positions of ones in the $(t-n)$-th rows ($t=i,j,k$) of $A_1$ and $A_2$ are disjoint,
  $a\notin (N(i,j,k)\cap\{1,\dots,n-1\})\cup M(i)\cup M(j)\cup M(k)=\{1,\dots,n-1\}$.
  This is a contradiction.
\end{proof}

\begin{lem}\label{lem2}
  The sets $\{1,\dots,n-1\}$, $\{n\}$, $\{n+1,\dots,2n-1\}$ are closed by the action of
  the automorphism group $\Aut(\mathfrak{Y})$.
\end{lem}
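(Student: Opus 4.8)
The plan is to use Lemma~\ref{lem1} to pin down the set $\{n\}$ first, then separate the other two blocks by a structural invariant that $\Aut(\mathfrak{Y})$ must preserve. For the first step, I would argue as follows. A permutation $\sigma \in \Aut(\mathfrak{Y})$ preserves the values $\nu(i,j,k)$, so it permutes the set $T$ of triples $(i,j,k)$ with $i<j<k$ achieving the maximum value $(n-2)/2$. By Lemma~\ref{lem1} these are exactly the triples $\{a, n, n+a\}$ with $1 \le a \le n-1$, and there are $n-1$ of them. The key observation is that the element $n$ lies in \emph{every} triple of $T$, whereas any other element $m \in \{1,\dots,2n-1\}\setminus\{n\}$ lies in at most one such triple (it is the $a$ or the $a+n$ of a unique triple). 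Since $n-1 \ge 7 > 1$, the element of $\{1,\dots,2n-1\}$ lying in more than one triple of $T$ is unique and $\sigma$-invariant; hence $\sigma$ fixes $n$, i.e.\ $\{n\}$ is a closed set.

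For the remaining step I must show $\sigma$ cannot swap the two blocks $\{1,\dots,n-1\}$ and $\{n+1,\dots,2n-1\}$. One clean way: having fixed $n$, note that $\sigma$ permutes $N(n) = \{1,\dots,n-1\}$ among itself, because $N(n)$ is determined by $n$ via the relation $B_1$ and $P_\sigma^T B_1 P_\sigma = B_1$ forces $N(i^\sigma) = N(i)^\sigma$ for all $i$ (here one also uses $N(n^\sigma)=N(n)^\sigma$, and $n^\sigma=n$). Thus $\{1,\dots,n-1\} = N(n)$ is $\sigma$-invariant, and then its complement-minus-$n$, namely $\{n+1,\dots,2n-1\}$, is automatically $\sigma$-invariant as well. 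This gives all three assertions at once.

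The point requiring care is the claim $N(i^\sigma) = N(i)^\sigma$. This follows from $P_\sigma^T B_1 P_\sigma = B_1$: the $(i,j)$-entry of the left side is $(B_1)_{i^\sigma j^\sigma}$, so $(B_1)_{i^\sigma j^\sigma} = (B_1)_{ij}$ for all $i,j$; equivalently $j \in N(i) \iff j^\sigma \in N(i^\sigma)$, which is exactly $N(i^\sigma) = N(i)^\sigma$. Then $N(n) = \{1,\dots,n-1\}$ by the block form \eqref{eq1}, and $n^\sigma = n$ from the first step, so $\{1,\dots,n-1\}^\sigma = N(n)^\sigma = N(n^\sigma) = N(n) = \{1,\dots,n-1\}$.

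I expect the main obstacle to be purely bookkeeping: making sure the count ``$n-1$ triples, each containing $n$, and every other point in at most one'' is argued cleanly, and that the case $n \ge 8$ is genuinely needed (it is: it guarantees $n-1 \ge 2$ so that ``lies in more than one maximal triple'' singles out $n$ uniquely, and Lemma~\ref{lem1} itself already assumed $n \ge 8$). No deep ideas are needed beyond Lemma~\ref{lem1} and the defining relation of an automorphism.
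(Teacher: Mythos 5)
Your proof is correct and follows essentially the same route as the paper: Lemma~\ref{lem1} plus invariance of $\nu(i,j,k)$ forces $n^\sigma=n$, and then the $n$-th row of $B_1$ (i.e.\ $N(n)=\{1,\dots,n-1\}$ together with $N(n^\sigma)=N(n)^\sigma$) fixes the two remaining blocks. You merely spell out the counting behind ``$n$ is the unique point common to all maximal triples'' and the identity $N(i^\sigma)=N(i)^\sigma$, which the paper leaves implicit.
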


\begin{proof}
  Let $\sigma$ be an automorphism of $\mathfrak{Y}$.
  Since $\nu(i,j,k)=\nu(i^\sigma,j^\sigma,k^\sigma)$,
  Lemme \ref{lem3} shows that $n^\sigma=n$.
  Now $\{1,\dots,n-1\}$ and $\{n+1,\dots,2n-1\}$ are fixed by $\sigma$
  by the form of $n$-th row of $B_1$ in (\ref{eq1}).
\end{proof}

\begin{lem}\label{lem3}
  The automorphism group $\Aut(\mathfrak{Y})$ is isomorphic to $\Aut(\mathfrak{X})$.
\end{lem}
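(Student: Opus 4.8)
The plan is to feed Lemma~\ref{lem2} into the block description (\ref{eq1}) of $B_1$ and read off what an automorphism must do, then check that everything so obtained really comes from $\Aut(\mathfrak{X})$ acting ``diagonally''.

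First I would take $\sigma\in\Aut(\mathfrak{Y})$. By Lemma~\ref{lem2}, $\sigma$ fixes the point $n$ and preserves the blocks $\{1,\dots,n-1\}$ and $\{n+1,\dots,2n-1\}$; identifying the second block with $\{1,\dots,n-1\}$ via $a\mapsto a+n$, let $\tau$ and $\rho$ be the permutations of $\{1,\dots,n-1\}$ induced by $\sigma$ on the first and third blocks. Comparing the nine blocks of $P_\sigma^T B_1 P_\sigma$ with those of $B_1$ in (\ref{eq1}): the blocks meeting the middle row/column are the constant patterns $0$, all-ones, all-zeros and impose no condition; the $(1,1)$-block gives $P_\tau^T A_1 P_\tau = A_1$; the $(3,1)$-block gives $P_\rho^T A_1 P_\tau = A_1$; the $(1,3)$-block gives $P_\tau^T(A_0+A_1)P_\rho = A_0+A_1$; and the $(3,3)$-block gives $P_\rho^T A_2 P_\rho = A_2$. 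By the Example of Section~2, the $(1,1)$-block equation already says $\tau\in\Aut(\mathfrak{X})$, so $P_\tau^T A_1 P_\tau = A_1$; substituting this into the $(3,1)$-block equation gives $P_\rho^T A_1 P_\tau = P_\tau^T A_1 P_\tau$, and cancelling $P_\tau$ and then the invertible matrix $A_1$ (invertibility is recorded in the Example) yields $\rho=\tau$. The remaining $(1,3)$- and $(3,3)$-block equations are then automatic, since an automorphism of $\mathfrak{X}$ fixes $A_0$, $A_1$, $A_2$. Thus the restriction map $\sigma\mapsto\tau$ sends $\Aut(\mathfrak{Y})$ into $\Aut(\mathfrak{X})$, and it is injective because $\sigma$ is recovered from $\tau$ by fixing $n$ and acting as $\tau$ on both blocks.

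For surjectivity, given $\tau\in\Aut(\mathfrak{X})$ I would define $\sigma\in S_{2n-1}$ to fix $n$ and to act as $\tau$ on each of $\{1,\dots,n-1\}$ and $\{n+1,\dots,2n-1\}$; running the same block-by-block comparison — now with $\rho=\tau$ from the start — shows $P_\sigma^T B_1 P_\sigma = B_1$, hence $\sigma\in\Aut(\mathfrak{Y})$ by the Example (as $\mathfrak{Y}$ is non-symmetric of class $2$, fixing $B_1$ suffices). This is clearly inverse to $\sigma\mapsto\tau$, and both directions are group homomorphisms since they are restriction to, and extension from, a fixed block decomposition. Hence $\Aut(\mathfrak{Y})\cong\Aut(\mathfrak{X})$. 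I do not expect a real obstacle here: Lemma~\ref{lem2} does the substantive work of locking the block structure, and the rest is bookkeeping. The one point needing care is the deduction $\rho=\tau$, which genuinely uses both that $\tau$ commutes with $A_1$ (because $\tau\in\Aut(\mathfrak{X})$) and that $A_1$ is invertible; I would also keep the index shift $a\leftrightarrow a+n$ straight throughout the block computations.
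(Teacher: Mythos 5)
Your proposal is correct and follows essentially the same route as the paper: use Lemma~\ref{lem2} to put $P_\sigma$ in block-diagonal form, read off $P_\tau^T A_1 P_\tau=A_1$ to get $\tau\in\Aut(\mathfrak{X})$, and use the invertibility of $A_1$ (and of $P_\tau$) on the off-diagonal block equation to force the two block permutations to coincide, with the converse being the obvious diagonal extension. Your write-up merely spells out the injectivity/surjectivity/homomorphism bookkeeping that the paper leaves implicit.
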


\begin{proof}
  Suppose $\sigma\in \Aut(\mathfrak{Y})$.
  By Lemma \ref{lem2}, we can write
  $$P_{\sigma}=\left[
    \begin{array}{ccc}
      P_\tau&& \\
       &1 &\\
       &&P_\mu
    \end{array}
  \right]$$
  for some permutation matrices $P_\tau$ and $P_\mu$ of degree $n-1$.
  Now $B_1=P_\sigma^T B_1 P_\sigma$ shows
  $$\left[
    \begin{array}{ccc|c|ccc}
      &&&0&&&\\
      &A_1&&\vdots&&A_0+A_1&\\
      &&&0&&&\\
      \hline
      1&\dots&1&0&0&\dots&0\\
      \hline
      &&&1&&&\\
      &A_1&&\vdots&&A_2&\\
      &&&1&&&
    \end{array}
  \right]
  =\left[
    \begin{array}{ccc|c|ccc}
      &&&0&&&\\
      &P_\tau^TA_1P_\tau&&\vdots&&P_\tau^T(A_0+A_1)P_\mu&\\
      &&&0&&&\\
      \hline
      1&\dots&1&0&0&\dots&0\\
      \hline
      &&&1&&&\\
      &P_\mu^T A_1 P_\tau&&\vdots&&P_\mu^T A_2 P_\mu&\\
      &&&1&&&
    \end{array}
  \right].$$
  By $A_1=P_\tau^TA_1 P_\tau$, $P_\tau$ is an automorphism of $\mathfrak{X}$.
  Since $P_\tau$ and $A_1$ are invertible, $A_1=P_\mu^T A_1 P_\tau$ shows $\tau=\mu$.
  Thus $\sigma$ defines an automorphism $\tau$ of $\mathfrak{X}$.
  
  Conversely, it is clear that an automorphism of $\mathfrak{X}$ defines
  an automorphism of $\mathfrak{Y}$.
\end{proof}

Now we state our main result.
In general, one skew-Hadamard matrix gives non-isomorphic association schemes,
see \cite{Hanaki-Kharaghani-Mohammadian-TayfehRezaie2020}, for example.
Our argument is valid only for the construction in Introduction.
To clarify our result, we will not use skew-Hadamard matrices in the statement.

\begin{thm}\label{thm4}
  Let $\mathfrak{X}=\{A_0,A_1,A_2\}$  be a non-symmetric class $2$ association scheme
  of order $m\geq 7$.
  Set $B_0=I_{2m+1}$, $B_1$ by (\ref{eq1}) and $B_2=B_1^T$.
  Then $\mathfrak{Y}=\{B_0, B_1, B_2\}$ is a non-symmetric class $2$ association scheme
  of order $2m+1$,
  the automorphism group of $\mathfrak{Y}$ is intransitive,
  and thus $\mathfrak{Y}$ is non-schurian.
  Moreover, the automorphism group $\Aut(\mathfrak{Y})$ is isomorphic to $\Aut(\mathfrak{X})$.
\end{thm}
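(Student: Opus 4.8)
The plan is to assemble Theorem \ref{thm4} directly from the three lemmas, after first reducing the skew-Hadamard setting to the abstract statement about an arbitrary non-symmetric class $2$ scheme. First I would observe that nothing in the proofs of Lemmas \ref{lem1}--\ref{lem3} actually uses that $\mathfrak{X}$ comes from a skew-Hadamard matrix: the only inputs are the intersection numbers (\ref{eq2})--(\ref{eq3}) (which hold for \emph{any} non-symmetric class $2$ scheme of order $m$, with $n$ replaced by $m+1$, as in the Example), the invertibility of $A_1$, and the explicit block form (\ref{eq1}) defining $B_1$. So I would set $n=m+1$, note $m\geq 7$ gives $n\geq 8$, and check once that the matrix $B_1$ in (\ref{eq1}) together with $B_2=B_1^T$ and $B_0=I_{2m+1}$ does satisfy the axioms of an association scheme: closure under transpose is built in, $B_0+B_1+B_2=J_{2m+1}$ follows from $A_0+A_1+A_2=J_{m}$ and a block computation, and the products $B_iB_j$ reduce to (\ref{eq4})--(\ref{eq5}) via the identities (\ref{eq2})--(\ref{eq3}) applied blockwise. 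This verifies that $\mathfrak{Y}$ is a non-symmetric class $2$ scheme of order $2m+1$, and in particular $2m+1\equiv 3\pmod 4$, consistent with the Example.

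Next I would invoke the structural lemmas in order. Lemma \ref{lem1} identifies, purely in terms of the scheme $\mathfrak{Y}$, the distinguished point: the index $n=m+1$ is characterized as the unique coordinate occurring in every triple $(i,j,k)$ with $\nu(i,j,k)=(n-2)/2$ (indeed the maximizing triples are exactly $(a,n,n+a)$). Since $\nu$ is $\Aut(\mathfrak{Y})$-invariant, this forces $n^\sigma=n$ for every $\sigma\in\Aut(\mathfrak{Y})$, and then the shape of the $n$-th row of $B_1$ — which has ones precisely in positions $1,\dots,n-1$ — forces $\sigma$ to preserve $\{1,\dots,n-1\}$ and hence also $\{n+1,\dots,2n-1\}$; this is Lemma \ref{lem2}. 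In particular $\Aut(\mathfrak{Y})$ fixes the point $n$, so it is not transitive on the $2m+1$ points, giving immediately that $\mathfrak{Y}$ is non-schurian (a schurian scheme has transitive automorphism group).

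Finally, for the isomorphism $\Aut(\mathfrak{Y})\cong\Aut(\mathfrak{X})$ I would quote Lemma \ref{lem3}: using the block decomposition of $P_\sigma$ permitted by Lemma \ref{lem2}, the equation $B_1=P_\sigma^T B_1 P_\sigma$ splits into the four block equations displayed in the proof of that lemma; the top-left block gives $A_1=P_\tau^T A_1 P_\tau$, so $\tau\in\Aut(\mathfrak{X})$, and the bottom-left block $A_1=P_\mu^T A_1 P_\tau$ combined with invertibility of $A_1$ and $P_\tau$ yields $\mu=\tau$. Thus $\sigma\mapsto\tau$ is a well-defined homomorphism $\Aut(\mathfrak{Y})\to\Aut(\mathfrak{X})$; it is injective because $\tau$ (with $\mu=\tau$) determines $P_\sigma$, and surjective because any $\tau\in\Aut(\mathfrak{X})$ yields, via the block matrix with $\tau$ in both outer blocks, an automorphism of $\mathfrak{Y}$ (one checks $B_1$ is fixed using $A_0+A_1$ and $A_2$ being permuted correctly by $\tau$). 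I expect the only genuinely computational obstacle to be the blockwise verification that (\ref{eq4})--(\ref{eq5}) follow from (\ref{eq2})--(\ref{eq3}) — i.e.\ that $\mathfrak{Y}$ really is a scheme in the abstract setting — but this is a finite check of nine $3\times3$-block matrix identities and carries no conceptual difficulty; everything else is an immediate assembly of the preceding lemmas.
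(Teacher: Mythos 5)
Your proposal is correct and follows essentially the same route as the paper, whose proof of Theorem \ref{thm4} simply assembles Lemmas \ref{lem2} and \ref{lem3} (with Lemma \ref{lem1} feeding into them). You additionally spell out details the paper leaves implicit --- that the lemmas use only the intersection numbers (\ref{eq2})--(\ref{eq3}) and the block form (\ref{eq1}), so they transfer to the abstract setting with $n=m+1$, and the blockwise check that $\mathfrak{Y}$ is indeed a scheme --- which is a faithful filling-in rather than a different approach.
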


\begin{proof}
  The first statement is by Lemma \ref{lem2} and the second statement is by Lemma \ref{lem3}.
\end{proof}

\begin{rem}
  If we consider the case $n=4$, then we can get
  a non-symmetric class $2$ association scheme of order $7$.
  Such an association scheme is unique up to isomorphism,
  that is corresponding to the Fano plane and schurian.
\end{rem}

\section*{Acknowledgments}
This work was supported by JSPS KAKENHI Grant Number JP17K05165.

\bibliographystyle{amsplain}
\providecommand{\href}[2]{#2}

\end{document}